\documentclass[journal]{article}

\usepackage{graphicx,url}
\usepackage{dcolumn}
\usepackage{bm}
\usepackage{amsmath,amsfonts,amssymb,citesort}

\newtheorem{definition}{\noindent{\it Definition}}[section]
\newtheorem{theorem}{\noindent{\it Theorem}}[section]

\newtheorem{corollary}[theorem]{\noindent{\it Corollary}}
\newenvironment{proof}{\noindent{\it Proof:}}{$\hfill$ $\Box$\\ }
\newtheorem{example}{\noindent{\it Example}}[section]

\begin{document}

\title{On Matroids and Linearly Independent Set Families}

\author{Giuliano G. La Guardia, Luciane Grossi, Welington Santos
\thanks{The authors are with Department of Mathematics and Statistics,
State University of Ponta Grossa, 84030-900, Ponta Grossa - PR,
Brazil. Corresponding author: Giuliano G. La Guardia ({\tt \small
gguardia@uepg.br}). }}

\maketitle

\begin{abstract}
New families of matroids are constructed in this note. These new
families are derived from the concept of linearly independent set
family (LISF) introduced by Eicker and Ewald [Linear Algebra and
its Applications 388 (2004) 173–-191]. The proposed construction
generalizes in a natural way the well known class of vectorial
matroids over a field.

\end{abstract}

\section{Introduction}

In his seminal paper \cite{Whitney:1935} on matroid theory,
Hassler Whitney dealt with the problem of characterizing matroids
that are representable over a given field (see also the
interesting papers
\cite{Brualdi:1969,Brylawski:1973,Greene:1976}). In fact, as it is
well known, the matroid theory is a powerful tool in order to
study several classes endowed with algebraic structures such as,
affine spaces, vector spaces, algebraic independence, graph theory
and so on. Among these classes, a particular class is of essential
importance: the class of vectorial matroids.

In this note we generalize the class of vectorial matroid by
applying the concept of linearly independent set family (LISF),
introduced by Eicker and Ewald \cite{Eicker:2004}, which extends
in a straightforward way the definition of linearly independent
vectors to independent sets in a vector space. More precisely, the
LISF's have essential ingredients in order to provide a natural
generalization of the class of vectorial matroids over a given
field.

Section~2 presents basic concepts on matroid theory and linearly
independent set family, necessary for the development of this
note. In Section 3, we present the contributions of this paper: a
new class of matroids derived from linearly independent set
families are constructed. In Section~4, the final remarks are
drawn.

\section{Preliminaries}\label{sec2}
This section is concerned with a review of matroid theory
\cite{Welsh:1976,Oxley:1992} as well as the review of the concept
of linearly independent set family (LISF) \cite{Eicker:2004}.

\subsection{Matroid Theory}\label{sub2.1}

As was said previously we utilize the definition of matroid based
on independent sets (although the other definitions are
equivalents). The following basic concepts can be found in
\cite{Oxley:1992}.

\begin{definition}\label{matro}
A matroid $M$ is an ordered pair $(S, \mathcal{I})$ consisting of
a finite set $S$ and a collection $\mathcal{I}$ of subsets of $S$
satisfying the following three conditions:\\
(I.1) $\emptyset \in \mathcal{I}$;\\
(I.2) If $I \in \mathcal{I}$ and ${I}' \subset I$, then ${I}' \in
\mathcal{I}$;\\
(I.3) If ${I}_1$, ${I}_2$ $\in \mathcal{I}$ and $\mid {I}_1\mid <
\mid {I}_2\mid$, then there exists an element $e \in I_{2} - I_1$
such that $I_1 \cup \{e\} \in \mathcal{I}$, where $\mid .\mid$
denote the cardinality of the set.
\end{definition}

If $M$ is the matroid $(S, \mathcal{I})$, then $M$ is called
\emph{matroid on} $S$. The members of $\mathcal{I}$ are
\emph{independent sets of} $M$, and $S$ is the \emph{ground set
of} $M$. A subset of $S$ that is not in $\mathcal{I}$ is called
\emph{dependent}. Minimal dependent sets are dependents sets all
of whose proper subsets are independents. Minimal dependent sets
are called \emph{circuits} of $M$. An independent set is called
\emph{maximal} if the inclusion of any element in this set results
in a dependent set. Maximal independent sets are called
\emph{basis} of the matroid. It is well known that a matroid can
be defined in many different (but equivalent) ways, i. e., by
means of independent sets, circuits, basis and so on. In our case
we consider the definition of matroid based on independent sets,
as given above.

Let us recall the well known concept of vectorial matroid:

\begin{theorem}\label{connec}
Let $S$ be the set of column labels of a matrix $A_{m \times n}$
over a field ${\mathbb F}$, and let $\mathcal{I}$ be the set of
subsets $X$ of $S$ for which the multiset of columns labelled by
$X$ is linearly independent (LI) in $V(m, F)$, the $m$-dimensional
vector space over ${\mathbb F}$. Then $(S, \mathcal{I})$ is a
matroid.
\end{theorem}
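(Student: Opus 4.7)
The plan is to verify the three axioms (I.1)--(I.3) of Definition~\ref{matro} for the pair $(S, \mathcal{I})$ described in the statement, working in the ambient vector space $V(m, \mathbb{F})$. Throughout, for $X \subseteq S$ I will write $C(X)$ for the multiset of columns of $A$ labelled by the elements of $X$, and $\mathrm{span}(X)$ for the subspace of $V(m,\mathbb{F})$ spanned by $C(X)$.

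For (I.1), I would observe that the empty multiset is vacuously linearly independent, hence $\emptyset \in \mathcal{I}$. For (I.2), suppose $I \in \mathcal{I}$ and $I' \subseteq I$; any nontrivial linear dependence among $C(I')$ would extend (by padding the missing coefficients with zeros) to a nontrivial linear dependence among $C(I)$, contradicting the independence of $C(I)$. Both of these are routine and should occupy only a few lines.

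The substantive step is (I.3), the exchange/augmentation axiom. Given $I_1, I_2 \in \mathcal{I}$ with $|I_1| < |I_2|$, I would argue by contradiction: assume that for every $e \in I_2 \setminus I_1$ the multiset $C(I_1 \cup \{e\})$ is linearly dependent. Since $C(I_1)$ is independent, this forces each such column to lie in $\mathrm{span}(I_1)$. Combined with the trivial inclusion $C(I_1 \cap I_2) \subseteq \mathrm{span}(I_1)$, one obtains $C(I_2) \subseteq \mathrm{span}(I_1)$, so $\mathrm{span}(I_2) \subseteq \mathrm{span}(I_1)$. Because $C(I_1)$ and $C(I_2)$ are independent, $\dim \mathrm{span}(I_1) = |I_1|$ and $\dim \mathrm{span}(I_2) = |I_2|$, giving $|I_2| \le |I_1|$, a contradiction. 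Hence some $e \in I_2 \setminus I_1$ yields $I_1 \cup \{e\} \in \mathcal{I}$.

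The only mild obstacle is handling the \emph{multiset} aspect of the column labelling: if two distinct labels point to the same column of $A$, then any $X$ containing both labels automatically has $C(X)$ linearly dependent, so such labels never appear together in any member of $\mathcal{I}$, and the dimension identities used in (I.3) remain valid. I would include a brief remark to this effect and then conclude that $(S, \mathcal{I})$ satisfies all three independence axioms, so it is a matroid.
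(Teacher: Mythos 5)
Your proof is correct, but note that the paper itself offers no proof of Theorem~\ref{connec}: it is stated in the preliminaries as a recalled, well-known fact (the vectorial matroid, with Oxley's book as the reference), so there is nothing in the text to compare against line by line. Your argument is the standard one --- (I.1) and (I.2) are immediate, and (I.3) follows by assuming no augmentation is possible, concluding that every column labelled by $I_2$ lies in $\mathrm{span}(I_1)$, and deriving the contradiction $|I_2| = \dim\mathrm{span}(I_2) \leq \dim\mathrm{span}(I_1) = |I_1| < |I_2|$. It is worth observing that this is essentially the same contradiction-plus-dimension-count strategy the authors later deploy to prove their generalization, Theorem~\ref{main}, so your write-up is consistent in spirit with the paper; your remark on repeated columns in the multiset is a sensible precaution, though as you note it causes no difficulty since such label pairs never occur inside a member of $\mathcal{I}$.
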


\subsection{Linearly Independent set Families}\label{sub2.2}

The concept of linearly independent set families was introduced by
Eicker and Ewald in \cite{Eicker:2004}. This definition extends in
a straightforward way the definition of linearly independent
vectors to independent sets in a vector space. Although our
definition is different from the original one, it contains
essentially the same idea contained in \cite{Eicker:2004}.

\begin{definition}
Let ${\mathbb V}$ be a $l$-dimensional vector space over a field
${\mathbb F}$. A family $\mathcal{J}$ of non-empty subsets
$C_{i}\subset {\mathbb V}$, where $i=1,\ldots, n$ ( $n \leq l$),
given by $\mathcal{J}:= \{ C_{i}, i=1,\ldots, n \}$ is called a
\emph{linearly independent set family} (LISF) if and only if any
selection of $n$ vectors $v_{i} \in C_{i}$ is linearly independent
in ${\mathbb V}$.
\end{definition}


\begin{example}
The first and trivial example of LISF is a set of $n\leq l$
linearly independent vectors in ${\mathbb{R}}^{l}$. As a second
illustrative example, consider in ${\mathbb{R}}^{2}$ the open
quadrants $C_{1}:= \{ (x_1, x_{2}): x_1 > 0, x_{2}
> 0 \}$, and $C_{2}:= \{ (x_1,x_2): x_1 > 0, x_2 < 0 \}$. Then
$\mathcal{F}:= \{ C_{1}, C_{2} \}$ is a LISF.
\end{example}

\section{The Results}

In this section we present the contributions of the paper.
Theorems~\ref{main} and \ref{main-multdim} generalize the well
known class of vectorial matroids over a given field,
consequently, new families of matroids  are obtained.

\begin{theorem}\label{main}
Consider that $n\geq 1$ and $ l\geq 1$ are integers. Let $E_1,
E_{2}, \ldots , E_{n}$ be subsets of a finite dimensional vector
space ${\mathbb V}$ over a field ${\mathbb F}$ such that $E_{i}
\subset {\mathbb{W}}_{i}$ for all $i=1, \ldots , n$, where
${\mathbb{W}}_{i}$ are one-dimensional subspaces of ${\mathbb V}$.
Consider the multiset of labels $S =\{ 1, \ldots , n\}$, and let
$\mathcal{I}$ be the set of subsets $I = \{ i_1, \ldots , i_{j}\}$
of $S$ for which $\{E_{i_{1}}, E_{i_{2}}, \ldots , E_{i_{j}}\}$
form a LISF. Then the ordered pair $(S, \mathcal{I})$ is a
matroid.
\end{theorem}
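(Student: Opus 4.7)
The plan is to reduce the LISF condition on a family $\{E_{i_1}, \ldots, E_{i_j}\}$ to ordinary linear independence of single representative vectors drawn one from each $E_{i_k}$, and then invoke Theorem \ref{connec}. The key observation enabling this reduction is that each $E_i$ sits inside a one-dimensional subspace $\mathbb{W}_i$, so all non-zero elements of $E_i$ are non-zero scalar multiples of one another and therefore behave identically with respect to linear (in)dependence.

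First I would establish the reduction precisely. For each index $i$ with $E_i \setminus \{0\} \neq \emptyset$, fix an arbitrary non-zero $w_i \in E_i$. Because every element of $E_i$ equals $\alpha w_i$ for some $\alpha \in \mathbb{F}$, and because non-zero scaling preserves linear independence, I expect the family $\{E_{i_1}, \ldots, E_{i_j}\}$ to be a LISF if and only if (a)~each $E_{i_k}$ is non-empty and does not contain the zero vector, and (b)~$\{w_{i_1}, \ldots, w_{i_j}\}$ is linearly independent in $\mathbb{V}$. The forward direction is forced because any $E_{i_k}$ containing $0$ admits the dependent selection $v_{i_k} = 0$, while the reverse direction uses the scaling invariance just described.

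With this equivalence in hand, I would extend the assignment $i \mapsto w_i$ to all of $S$ by declaring $w_i = 0$ whenever $E_i$ is empty or contains $0$, and form the matrix $A$ over $\mathbb{F}$ whose column labelled $i$ is $w_i$. The reduction then says that $I \in \mathcal{I}$ precisely when the multiset of columns of $A$ indexed by $I$ is linearly independent, so $\mathcal{I}$ coincides with the collection of independent sets of the vectorial matroid associated to $A$, and Theorem \ref{connec} immediately delivers the conclusion.

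The principal obstacle is the reduction step, and specifically the bookkeeping for the degenerate indices where $E_i$ is empty or contains $0$; those indices simply become loops of the resulting matroid. If one prefers to avoid appealing to Theorem \ref{connec}, axioms (I.1)--(I.3) can be checked directly after the reduction: (I.1) is vacuous, (I.2) is immediate since any subfamily of a LISF is plainly a LISF, and (I.3) follows by applying the standard Steinitz exchange to $\{w_i : i \in I_1 \cup I_2\}$ and transporting the result back across the reduction using the equivalence above.
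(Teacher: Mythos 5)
Your proposal is correct, and it takes a genuinely different route from the paper. The paper proves (I.3) directly by contradiction: assuming no element of $I_2-I_1$ can be added to $I_1$, it shows (using the one-dimensionality of the ${\mathbb W}_e$'s) that every $E_e$ with $e\in I_2-I_1$ lies in the span $\mathbb{Y}$ of $E_{i_{a_1}},\ldots,E_{i_{a_j}}$, hence so does the span of the sets indexed by $I_2$, forcing $|I_2|\leq\dim\mathbb{Y}\leq|I_1|<|I_2|$. You instead prove a representative-selection lemma --- that $\{E_{i_1},\ldots,E_{i_j}\}$ is a LISF iff each $E_{i_k}$ is non-empty and avoids $0$ and the fixed non-zero representatives $w_{i_1},\ldots,w_{i_j}$ form a linearly independent multiset --- and then transport the whole structure onto the vectorial matroid of the matrix with columns $w_i$ (degenerate indices becoming zero columns, i.e.\ loops), so that Theorem~\ref{connec} finishes the argument. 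Both proofs exploit the same core fact that all non-zero vectors of $E_i$ are non-zero scalar multiples of one another; the difference is that the paper re-runs the exchange/dimension argument from scratch, whereas your reduction is more modular and yields the slightly stronger conclusion that $(S,\mathcal{I})$ is not merely a matroid but is literally the vectorial matroid of an explicit matrix, hence representable over ${\mathbb F}$. Your bookkeeping of the degenerate cases (empty $E_i$, or $0\in E_i$) is the one point that needs care, and you handle it correctly: such indices can never appear in a LISF under the paper's definition (which requires non-empty members and for which the selection $v_i=0$ is dependent), and mapping them to the zero column reproduces exactly that behaviour.
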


\begin{proof}
We must prove that $(S, \mathcal{I})$ satisfies (I.1), (I.2) and
(I.3) of Definition~\ref{matro}. Properties (I.1) and (I.2) are
clearly satisfied.

Let us now show that (I.3) holds. Seeking a contradiction, suppose
that (I.3) does not hold. Consider that $I_1 , I_{2} \in
\mathcal{I}$ with $| I_1 | < | I_{2}|$, where $I_1 = \{ i_{a_{1}},
\ldots , i_{a_{j}}\}$ and $I_{2} = \{ i_{b_{1}}, \ldots ,
i_{b_{k}}\}$, $j <  k$. Then for each $e\in I_{2} - I_1$ it
follows that $I_1 \cup \{e\} \notin \mathcal{I}$. We know that
$\{E_{i_{a_{1}}}, E_{i_{a_{2}}}, \ldots , E_{i_{a_{j}}}\}$ and
$\{E_{i_{b_{1}}}, E_{i_{b_{2}}}, \ldots , E_{i_{b_{k}}}\}$ are
LISF's. Since $I_1 \cup \{e\} \notin \mathcal{I}$ for each $e\in
I_{2} - I_1$, then the sets $\{E_{i_{a_{1}}}, E_{i_{a_{2}}},
\ldots ,$ $E_{i_{a_{j}}}, E_{e}\}$ does not form a LISF for each
$e\in I_{2} - I_1$. Fix $e\in I_{2} - I_1$. Then there exist
vectors $ { \bf v}_{l}\in E_{i_{a_{l}}}$, where $1\leq l \leq j$,
and ${ \bf x}\in E_{e}$ such that ${ \bf v}_1, \ldots , { \bf
v}_{j}, { \bf x}$ are linearly dependents in ${\mathbb V}$. Hence
there exist ${\alpha}_{x}, {\alpha}_{l} \in \mathbb{F}$, $1\leq l
\leq j$, with ${\alpha}_{x} \neq 0$ such that ${\alpha}_1 { \bf
v}_{1} + \ldots + {\alpha}_{j} { \bf v}_{j} + {\alpha}_{x} { \bf
x}=0$, otherwise the unique solution for the last equality would
be ${\alpha}_{x}={\alpha}_{l}=0$ for each $1\leq l \leq j$, which
is a contradiction. This means that $ { \bf x}= \left(-{\alpha}_1
{\alpha}_{x}^{-1}\right) { \bf v}_{1} + \ldots +
\left(-{\alpha}_{j} {\alpha}_{x}^{-1}\right) { \bf v}_{j}$. For
every vector ${\bf w} \in E_{e}$ we have ${\bf w}= \beta { \bf x}=
\left(-{\alpha}_1 \beta {\alpha}_{x}^{-1}\right) { \bf v}_{1} +
\ldots + \left(-{\alpha}_{j}\beta {\alpha}_{x}^{-1}\right) { \bf
v}_{j}$, $\beta \in {\mathbb F}$, because $E_{e}\subset
{\mathbb{W}}_{e}$ and ${\mathbb{W}}_{e}$ is an one-dimensional
subspace of ${\mathbb V}$. Thus the subspace ${\mathbb{U}}_{e}$
spanned by the sets $E_{i_{a_{1}}}, E_{i_{a_{2}}}, \ldots $,
$E_{i_{a_{j}}}, E_{e}$, is contained in the subspace $\mathbb{Y}$
spanned by $E_{i_{a_{1}}}, E_{i_{a_{2}}}, \ldots , E_{i_{a_{j}}}$,
for each $e \in I_{2} - I_1$. Consequently, the subspace
$\mathbb{X}$ spanned by $E_{i_{a_{1}}}, E_{i_{a_{2}}}, \ldots ,
E_{i_{a_{j}}}, E_{i_{b_{1}}}, E_{i_{b_{2}}}, \ldots ,$
$E_{i_{b_{k}}}$, is also contained in $\mathbb{Y}$, so it follows
that $|I_{2}| \leq \dim(\mathbb{W})\leq |I_{1}| < |I_{2}|$, which
is a contradiction. Therefore the ordered pair $(S, \mathcal{I})$
is a matroid.
\end{proof}


In the following corollaries of Theorem~\ref{main}, one can
generate more families of matroids:

\begin{corollary}\label{cor1}
Suppose that $n\geq 1$ and $ l\geq 1$ are integers. Let $E_1,
E_{2}, \ldots , E_{n}$ be subsets of a finite dimensional vector
space ${\mathbb V}$ such that $E_{i} \subset {\mathbb{W}}_{i}$ for
all $i=1, \ldots , n$, where ${\mathbb{W}}_{i}$ are
one-dimensional subspaces of ${\mathbb V}$. Consider the multiset
of labels $S =\{ 1, \ldots , n\}$, and let $\mathcal{I}$ be the
set of subsets $I = \{ i_1, \ldots , i_{j}\}$ of $S$ for which
$\{{\lambda_{i_{1}}}E_{i_{1}}, {\lambda_{i_{2}}}E_{i_{2}}, \ldots
,$ $ {\lambda_{i_{j}}}E_{i_{j}}\}$ form a LISF, where
${\lambda_{i_{r}}} \neq 0$ for all $r=1, \ldots, j$. Then the
ordered pair $(S, \mathcal{I})$ is a matroid.
\end{corollary}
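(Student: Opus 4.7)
The plan is to reduce Corollary~\ref{cor1} directly to Theorem~\ref{main} by reparametrizing the given sets. The crucial observation is that scaling a subset of a one-dimensional subspace by a nonzero scalar keeps it inside the same one-dimensional subspace.

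First I would define, for each $i \in \{1,\dots,n\}$, the set $F_i := \lambda_i E_i$. Since $E_i \subset \mathbb{W}_i$ and $\mathbb{W}_i$ is an one-dimensional subspace of $\mathbb{V}$, the nonzero scalar $\lambda_i$ satisfies $\lambda_i \mathbb{W}_i = \mathbb{W}_i$ (a one-dimensional subspace is stable under multiplication by any nonzero field element). Hence $F_i = \lambda_i E_i \subset \lambda_i \mathbb{W}_i = \mathbb{W}_i$, so each $F_i$ still lies inside an one-dimensional subspace of $\mathbb{V}$.

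Next I would observe that the family $\mathcal{I}$ defined in the corollary coincides with the family of index sets $I = \{i_1,\dots,i_j\}$ for which $\{F_{i_1},\dots,F_{i_j}\}$ forms a LISF, simply because $F_{i_r} = \lambda_{i_r} E_{i_r}$ by construction. Therefore the pair $(S,\mathcal{I})$ is exactly the pair produced by applying Theorem~\ref{main} to the subsets $F_1, F_2, \dots, F_n$ of $\mathbb{V}$.

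Finally I would invoke Theorem~\ref{main} on the $F_i$'s to conclude that $(S,\mathcal{I})$ satisfies axioms (I.1)--(I.3) of Definition~\ref{matro} and is therefore a matroid. There is no real obstacle here: the only point one must verify is the stability $\lambda_i \mathbb{W}_i = \mathbb{W}_i$, which follows immediately from $\lambda_i \neq 0$ and $\mathbb{W}_i$ being a subspace; after this reduction, the three matroid axioms come for free from Theorem~\ref{main}.
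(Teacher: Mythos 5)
Your proof is correct and follows essentially the same route as the paper: a one-line reduction to Theorem~\ref{main} exploiting the fact that multiplication by a nonzero scalar is harmless. The only (minor) difference is that the paper phrases this as the invariance of the LISF property under nonzero scaling, so that $\mathcal{I}$ coincides with the independence family of the unscaled $E_i$'s, whereas you apply Theorem~\ref{main} directly to the scaled sets $F_i=\lambda_i E_i$ after checking that they still lie in the one-dimensional subspaces ${\mathbb{W}}_i$; both reductions are valid.
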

\begin{proof}
It follows from the fact that $\{E_{i_{1}}, E_{i_{2}}, \ldots ,
E_{i_{j}}\}$ is a LISF if and only if
$\{{\lambda_{i_{1}}}E_{i_{1}}, {\lambda_{i_{2}}}E_{i_{2}}, \ldots
, {\lambda_{i_{j}}}E_{i_{j}}\}$ is a LISF, where
${\lambda_{i_{r}}} \neq 0$ for all $r=1, \ldots, j$.
\end{proof}

\begin{corollary}\label{cor2}
Consider that $n\geq 1$ and $ l\geq 1$ are integers and let $E_1,
E_{2}, \ldots , E_{n}$ be subsets of a finite dimensional vector
space ${\mathbb V}$ such that $E_{i} \subset {\mathbb{W}}_{i}$ for
all $i=1, \ldots , n$, where ${\mathbb{W}}_{i}$ are
one-dimensional subspaces of ${\mathbb V}$. Assume that $S =\{ 1,
\ldots , n\}$ is the multiset of labels and $\mathcal{I}$ is the
set of subsets $I = \{ i_1, \ldots , i_{j}\}$ of $S$ such that
$\{T(E_{i_{1}}), T(E_{i_{2}}), \ldots ,$ $ T(E_{i_{j}})\}$ form a
LISF, for any isomorphism $T$ on ${\mathbb V}$. Then the ordered
pair $(S, \mathcal{I})$ is a matroid.
\end{corollary}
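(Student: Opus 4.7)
The plan is to reduce the statement to a direct invocation of Theorem~\ref{main}, using as input data the images $T(E_1), T(E_2), \ldots, T(E_n)$ together with the subspaces $T(\mathbb{W}_1), T(\mathbb{W}_2), \ldots, T(\mathbb{W}_n)$, rather than the original $E_i$'s and $\mathbb{W}_i$'s. The key observation enabling this reduction is that an isomorphism preserves both set inclusion and the dimension of subspaces, so the hypotheses of Theorem~\ref{main} transport cleanly from the $E_i$'s to their images.

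First I would verify the containment hypothesis: since $E_i \subset \mathbb{W}_i$ and $T$ is a function, applying $T$ to both sides yields $T(E_i) \subset T(\mathbb{W}_i)$ for every $i = 1, \ldots, n$. Next I would check the dimension hypothesis: because $T$ is an isomorphism on $\mathbb{V}$ it is in particular a linear injection, so the image of any subspace is a subspace of the same dimension; in particular each $T(\mathbb{W}_i)$ is a one-dimensional subspace of $\mathbb{V}$.

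At this point the hypotheses of Theorem~\ref{main} are satisfied by the family $\{T(E_i)\}_{i=1}^{n}$ relative to the one-dimensional subspaces $\{T(\mathbb{W}_i)\}_{i=1}^{n}$. Applying Theorem~\ref{main} to these images produces a matroid on $S$ whose independent sets are exactly those $I = \{i_1, \ldots, i_j\}$ for which $\{T(E_{i_1}), \ldots, T(E_{i_j})\}$ is a LISF, which is precisely the family $\mathcal{I}$ described in the corollary. There is no substantive obstacle here; the argument is essentially a transport-of-structure along $T$, and the only point worth isolating is the preservation of one-dimensionality under an isomorphism, which is immediate from the preservation of dimension.
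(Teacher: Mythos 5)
Your argument is correct, but it reduces to Theorem~\ref{main} along a slightly different path than the paper does. You \emph{transport the hypotheses}: you check that $T(E_{i})\subset T({\mathbb{W}}_{i})$ and that each $T({\mathbb{W}}_{i})$ is again a one-dimensional subspace, then invoke Theorem~\ref{main} for the new data $T(E_{1}),\ldots,T(E_{n})$, whose independence family is by definition the $\mathcal{I}$ of the corollary. The paper instead keeps the original data and \emph{transports the conclusion}: it observes that $\{E_{i_{1}},\ldots,E_{i_{j}}\}$ is a LISF if and only if $\{T(E_{i_{1}}),\ldots,T(E_{i_{j}})\}$ is (an isomorphism and its inverse both preserve linear independence of any selection of vectors), so the $\mathcal{I}$ of the corollary is literally the same set system that Theorem~\ref{main} already certified for $E_{1},\ldots,E_{n}$. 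Both routes are equally short and both are sound; yours needs only that $T$ preserves subspaces and their dimensions, while the paper's needs the (equally immediate) invariance of LISF-ness under $T$ and has the small extra payoff of identifying the resulting matroid as \emph{equal} to the one from Theorem~\ref{main}, not merely as another instance of it. One last point: the phrase ``for any isomorphism $T$'' in the statement is harmless under either reading precisely because of the paper's equivalence --- the condition does not depend on which $T$ is chosen --- so fixing a single $T$, as you do, loses nothing.
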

\begin{proof}
This is true due to the fact that $\{E_{i_{1}}, E_{i_{2}}, \ldots
, E_{i_{j}}\}$ form a LISF if and only if $\{T(E_{i_{1}}),
T(E_{i_{2}}), \ldots , T(E_{i_{j}})\}$ form a LISF.
\end{proof}

\begin{corollary}\label{cor3}
Consider that $n\geq 1$ and $ l\geq 1$ are integers. Let $E_1,
E_{2}, \ldots , E_{n}$ be subsets of a finite dimensional vector
space ${\mathbb V}$ such that $E_{i} \subset {\mathbb{W}}_{i}$ for
all $i=1, \ldots , n$, where ${\mathbb{W}}_{i}$ are
one-dimensional subspaces of ${\mathbb V}$. Consider the multiset
of labels $S =\{ 1, \ldots , n\}$ and let $\mathcal{I}$ be the set
of subsets $I = \{ i_1, \ldots , i_{j}\}$ of $S$ such that
$\{E_{i_{1}}\cup (-E_{i_{1}}), E_{i_{2}}\cup (-E_{i_{2}}), \ldots
, E_{i_{j}}\cup (-E_{i_{j}})\}$ form a LISF. Then the ordered pair
$(S, \mathcal{I})$ is a matroid.
\end{corollary}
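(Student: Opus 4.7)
The plan is to reduce the corollary directly to Theorem~\ref{main} rather than re-verify the independence axioms (I.1)--(I.3). The key observation I would use is that each $\mathbb{W}_i$ is a subspace of $\mathbb{V}$ and is therefore closed under multiplication by the scalar $-1$, so $-E_i \subset \mathbb{W}_i$ whenever $E_i \subset \mathbb{W}_i$. Consequently, if I set $E_i' := E_i \cup (-E_i)$, the enlarged sets continue to satisfy $E_i' \subset \mathbb{W}_i$, and the hypotheses of Theorem~\ref{main} are preserved for the family $E_1', E_2', \ldots, E_n'$.

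With this in place, I would then observe that the independence collection $\mathcal{I}$ described in the corollary is, by definition, exactly the one furnished by Theorem~\ref{main} when applied to $E_1', \ldots, E_n'$: a subset $I = \{i_1, \ldots, i_j\} \subset S$ lies in $\mathcal{I}$ if and only if $\{E_{i_1}', E_{i_2}', \ldots, E_{i_j}'\}$ is a LISF. Hence $(S, \mathcal{I})$ is a matroid by direct appeal to Theorem~\ref{main}, and no axiom verification is required.

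The only step calling for genuine thought is the closure of $\mathbb{W}_i$ under negation, and that is immediate from the definition of a subspace, so there is no real obstacle. The corollary is thus a one-line application of the preceding theorem to a symmetric choice of subsets. An alternative, and strictly longer, route would be to repeat verbatim the contradiction argument in the proof of Theorem~\ref{main} with $E_i'$ in place of $E_i$; this would produce the same conclusion but add no new content, so I would prefer the short reduction.
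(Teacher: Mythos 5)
Your proof is correct. The paper also disposes of this corollary in one line by reduction to Theorem~\ref{main}, but via a different observation: it asserts that $\{E_{i_{1}},\ldots,E_{i_{j}}\}$ is a LISF if and only if $\{E_{i_{1}}\cup(-E_{i_{1}}),\ldots,E_{i_{j}}\cup(-E_{i_{j}})\}$ is a LISF (negating some vectors of a selection preserves linear independence, and every selection from the smaller sets is a selection from the larger ones), so that the collection $\mathcal{I}$ of the corollary coincides with the collection already shown to be a matroid in Theorem~\ref{main} for the original sets $E_{i}$. You instead check that the symmetrized sets $E_{i}':=E_{i}\cup(-E_{i})$ themselves satisfy the hypotheses of Theorem~\ref{main}, since each one-dimensional subspace ${\mathbb{W}}_{i}$ is closed under multiplication by $-1$, and then apply the theorem directly to $E_{1}',\ldots,E_{n}'$. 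Your route needs only this trivial closure property and exhibits the corollary as a literal instance of the theorem; the paper's route requires the (equally easy) LISF equivalence but yields the slightly stronger conclusion that the resulting matroid is identical to the one obtained from the unsymmetrized sets. Both arguments are sound, and yours is arguably the cleaner reduction.
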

\begin{proof}
Follows from the fact that $\{E_{i_{1}}, E_{i_{2}}, \ldots ,
E_{i_{j}}\}$ is a LISF if and only if $\{E_{i_{1}}\cup
(-E_{i_{1}}), E_{i_{2}}\cup (-E_{i_{2}}), \ldots , E_{i_{j}}\cup
(-E_{i_{j}})\}$ is a LISF.
\end{proof}

In the following examples we presents two LISF's that does not
form a matroid:

\begin{example}\label{exa1}
Consider the (real) vector space ${\mathbb R}^{2}$ and the
following subsets $E_1 , E_{2},$ $E_3$ of ${\mathbb R}^{2}$ given
by: $E_1 =\{ (x, y) \in {\mathbb R}^{2} |
{(x-1)}^{2}+{(y-1)}^{2}\leq 1 \}$; $E_{2} =\{ (x, y) \in {\mathbb
R}^{2} | {(x-1)}^{2}+ y^{2}\leq 1 \} \ \{(0, 0)\}$; $E_{3} =\{ (x,
y) \in {\mathbb R}^{2} | {(x-1)}^{2}+ {(y+1)}^{2}\leq 1/9 \}$.
Assume that $S=\{ 1, 2, 3\}$ and consider that $I = \{ i_1, \ldots
, i_{j}\} \in \mathcal{I}$ if and only if the sets $\{E_{i_{1}},
E_{i_{2}}, \ldots , E_{i_{j}}\}$ form a LISF. From construction
and applying the same notation as in Theorem~\ref{main}, one has
$\mathcal{I}=\{\emptyset, \{1\}, \{2,\}, \{3\}, \{1, 3\} \}$. But
since $|\{2\}|<|\{1, 3\}|$, one concludes from (I.3) that $\{1,
2\} \in \mathcal{I}$ or $\{2, 3\} \in \mathcal{I}$, a
contradiction.
\end{example}

\begin{example}\label{exa2}
Consider now the (real) vector space ${\mathbb R}^{3}$ and the
following subsets $E_1 , E_{2}, E_3$ of ${\mathbb R}^{3}$ given
by: $E_1 =\{ (x, y, z) \in {\mathbb R}^{3} | (x, y, z)= (0, 0, 0)+
a(1, 0, 0)+ b(0, 1, 0); a, b \in {\mathbb R} \}$ $- \{ (0, y, 0)|
y \in {\mathbb R} \}$; $E_{2} =\{ (x, y, z) \in {\mathbb R}^{3} |
(x, y, z)= (0, 0, 0)+ c(0, 0, 1)+ d(1, 1, 0); c, d \in {\mathbb R}
\}$ $- \{(0, 0, 0\}$; $E_{3} =\{ (x, y, z) \in {\mathbb R}^{3} |
(x, y, z)= (0, 0, 0)+ e(0, 1, 0)+ f(0, 0, 1); e, f \in {\mathbb R}
\}$  $- \{ (0, y, 0)| y \in {\mathbb R} \}$. As in the previous
example, if $S=\{ 1, 2, 3\}$, from construction we have
$\mathcal{I}=\{\emptyset, \{1\}, \{2,\}, \{3\}, \{1, 3\} \}$.
However, since $|\{2\}|<|\{1, 3\}|$, it follows from (I.3) that
$\{1, 2\} \in \mathcal{I}$ or $\{2, 3\} \in \mathcal{I}$, a
contradiction.
\end{example}

However, under suitable hypothesis one can get the following:

\begin{theorem}\label{main-multdim}
Let ${\mathbb F}$ be a field of characteristic zero. Assume that
${\mathbb V}={\mathbb{W}}_{1}\oplus\ldots \oplus {\mathbb{W}}_{k}$
is a vector space over ${\mathbb F}$ that is the direct sum of
$n$-dimensional subspaces ${\mathbb{W}}_{i}$, $i=1, \ldots , k$.
Let $E_{1}, \ldots , E_{m}$ be subsets of ${\mathbb V}$ such that
for each $i=1, \ldots , m$, $E_{i}$ contains (with exception of
the zero vector) an $n_{E_{i}}$-dimensional subspace of ${\mathbb
V}$ , where $\lceil n/2 \rceil + 1 \leq n_{E_{i}} \leq n$, and
$E_{i}\subset {\mathbb{W}}_{i^{*}}$ for some $1\leq i^{*}\leq k$.
If $S=\{ 1, \ldots, m\}$ is the multiset of labels and
$\mathcal{I}$ is the set of subsets $I = \{ i_1, \ldots , i_{r}\}$
of $S$ such that $\{E_{i_{1}}, E_{i_{2}}, \ldots , E_{i_{r}}\}$
form a LISF, then the ordered pair $(S, \mathcal{I})$ is a
matroid.
\end{theorem}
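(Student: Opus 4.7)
The plan is to set up a clean combinatorial characterization of membership in $\mathcal{I}$, reducing the matroid axioms to those of a partition matroid (up to loops). Axioms (I.1) and (I.2) follow immediately: the empty family is vacuously a LISF, and every subfamily of a LISF is a LISF. All of the work lies in (I.3).

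The structural claim I would prove first is: the family $\{E_{i_{1}},\ldots,E_{i_{r}}\}$ is a LISF if and only if $0\notin E_{i_{l}}$ for each $l$ \emph{and} the assignments $i_{1}^{*},\ldots,i_{r}^{*}$ to the direct summands are pairwise distinct. For the \emph{only if} direction, suppose two of the sets, say $E_{i_{1}}$ and $E_{i_{2}}$, lie in the same $\mathbb{W}_{\ell}$. Let $U_{i_{1}},U_{i_{2}}$ be the $n_{E_{i_{1}}}$- and $n_{E_{i_{2}}}$-dimensional subspaces they contain. Applying the Grassmann dimension formula inside $\mathbb{W}_{\ell}$ (which has dimension $n$) gives
\[
\dim(U_{i_{1}}\cap U_{i_{2}})\;\geq\;n_{E_{i_{1}}}+n_{E_{i_{2}}}-n\;\geq\;2\bigl(\lceil n/2\rceil+1\bigr)-n\;\geq\;2,
\]
so a nonzero vector $v$ lies in both. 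Selecting $v$ from $E_{i_{1}}$ and from $E_{i_{2}}$, and arbitrary nonzero vectors from the remaining $E_{i_{l}}$, yields a linearly dependent selection (the relation $1\cdot v+(-1)\cdot v+0\cdot(\cdots)=0$), contradicting the LISF hypothesis. The trivial case $0\in E_{i_{l}}$ similarly prevents the LISF property. For the \emph{if} direction, any selection $v_{l}\in E_{i_{l}}$ puts pairwise nonzero vectors into pairwise distinct summands of the direct sum, so a dependence relation decomposes componentwise and forces every coefficient to vanish.

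With this characterization in hand, $\mathcal{I}$ is precisely the independent-set family of a partition matroid. Writing $J_{\ell}:=\{\,i\in S : i^{*}=\ell,\ 0\notin E_{i}\,\}$, we have $I\in\mathcal{I}$ exactly when $|I\cap J_{\ell}|\leq 1$ for every $\ell$ and $I$ contains no loops. The exchange axiom then reduces to pigeonhole: if $|I_{1}|<|I_{2}|$ with $I_{1},I_{2}\in\mathcal{I}$, then the number of blocks $J_{\ell}$ that meet $I_{2}$ strictly exceeds the number that meet $I_{1}$, so some block $J_{\ell}$ intersects $I_{2}$ but not $I_{1}$. Any $e\in I_{2}\cap J_{\ell}$ lies outside $I_{1}$, and $I_{1}\cup\{e\}$ still meets every block in at most one element, so $I_{1}\cup\{e\}\in\mathcal{I}$.

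The main obstacle is the forward direction of the structural characterization, which is the only place where the quantitative dimension hypothesis $n_{E_{i}}\geq\lceil n/2\rceil+1$ is invoked; this assumption is tight in the sense that it is exactly what forces the two large subspaces inside a common $\mathbb{W}_{\ell}$ to share a nonzero vector. I would also remark that this route does not actually use characteristic zero—the Grassmann formula and the direct-sum decomposition both hold over an arbitrary field—so if the author's intended proof exploits characteristic zero, it must proceed along a different, more analytic line than the partition-matroid reduction sketched here.
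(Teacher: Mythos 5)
Your proof is correct, and its core mechanism is the same as the paper's: once the sets of an independent family are known to sit in pairwise distinct summands $\mathbb{W}_\ell$, a pigeonhole argument produces an element of $I_2-I_1$ whose set lives in a fresh summand, and the direct-sum decomposition forces any dependence relation to vanish componentwise. The genuine difference is that you isolate and actually prove the key structural lemma --- that a LISF cannot contain two sets in the same $\mathbb{W}_\ell$ --- via the Grassmann formula, $\dim(U_{i_1}\cap U_{i_2})\geq 2(\lceil n/2\rceil+1)-n\geq 2$, which is precisely where the hypothesis $n_{E_i}\geq\lceil n/2\rceil+1$ is consumed. The paper instead asserts ``from hypothesis, each of these sets is contained in distinct $\mathbb{W}_i$'s'' without argument, even though the stated hypotheses only place each $E_i$ in \emph{some} $\mathbb{W}_{i^*}$; your lemma is exactly the missing justification, so your write-up is in fact more complete than the paper's. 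Recasting $\mathcal{I}$ as the independent sets of a partition matroid (with loops for any $i$ with $0\in E_i$ --- a degenerate case the paper also ignores) buys a cleaner verification of (I.3) and makes the structure of the resulting matroid transparent. Your closing observation is also correct: neither your argument nor the paper's uses characteristic zero anywhere, so that hypothesis appears to be superfluous.
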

\begin{proof}
Obviously (I.1) and (I.2) are satisfied. We will prove (I.3). For,
assume that $I_1 , I_{2} \in \mathcal{I}$ with $|I_1| < |I_{2}|$
and $I_1 = \{ i_1, \ldots, i_{s} \}$ and $I_2 = \{ j_1, \ldots,
j_{t} \}$, where $s < t$. Thus the sets $\{E_{i_{1}}, E_{i_{2}},
\ldots , E_{i_{s}}\}$ form a LISF and, from hypothesis, each of
these sets is contained in distinct ${\mathbb{W}}_{i}$'s. These
facts also hold for the sets $\{E_{j_{1}}, E_{j_{2}}, \ldots ,
E_{j_{t}}\}$ corresponding to $I_2$. Suppose without loss of
generality (w.l.g.) that $E_{i_{1}}\subset {\mathbb{W}}_{1},$
$\ldots,$ $E_{i_{s}} \subset {\mathbb{W}}_{s}$. Since $|I_1| <
|I_{2}|$ then there exists an ${\mathbb{W}}_{s^{*}}\neq
{\mathbb{W}}_1, {\mathbb{W}}_{2}, \ldots, {\mathbb{W}}_{s}$ such
that $E_{j_{s+1}} \subset {\mathbb{W}}_{s^{*}}$. This is possible
due to the fact that each of the sets $E_{j_{1}}, E_{j_{2}},
\ldots , E_{j_{t}}$ is contained in distinct ${\mathbb{W}}_{i}$'s
and $s<t$. Consider the sets $E_{i_{1}}, E_{i_{2}}, \ldots ,
E_{i_{s}}, E_{j_{s+1}}$. For every choice of vectors ${{ \bf
v}}_{i_{1}}\in E_{i_{1}},$ \ $\ldots $ \ $,{{ \bf v}}_{i_{s}}\in
E_{i_{s}}$ and ${{ \bf v}}_{j_{s+1}}\in E_{j_{s+1}}$, we claim
that the vectors ${{ \bf v}}_{i_{1}}, \ldots, {{ \bf v}}_{i_{s}},
{{ \bf v}}_{j_{s+1}}$ are linearly independents. In fact, seeking
a contradiction we assume that the vectors are linearly
dependents. W.l.g., suppose that $a_{i_{1}} {{ \bf v}}_{i_{1}}+
\ldots + a_{i_{s}} {{ \bf v}}_{i_{s}}+ b_{j_{s+1}} {{ \bf
v}}_{j_{s+1}}=0$, where $a_{i_{1}}, \ldots, a_{i_{s}}, b_{j_{s+1}}
\in {\mathbb F}$, and $b_{j_{s+1}}\neq 0$; then $\left(a_{i_{1}}
b_{j_{s+1}}^{-1}\right) {{ \bf v}}_{i_{1}}+ \ldots +
\left(a_{i_{s}} b_{j_{s+1}}^{-1}\right) {{ \bf v}}_{i_{s}}+ {{ \bf
v}}_{j_{s+1}}=0$. Since $\left(a_{i_{1}} b_{j_{s+1}}^{-1}\right)
{{ \bf v}}_{i_{1}}\in {\mathbb{W}}_{1}, \ldots , \left(a_{i_{s}}
b_{j_{s+1}}^{-1}\right) {{ \bf v}}_{i_{s}}\in {\mathbb{W}}_{s}$
and ${{ \bf v}}_{j_{s+1}}\in {\mathbb{W}}_{s^{*}}$, then one has
${{ \bf v}}_{j_{s+1}}=0$, which is a contradiction. The cases for
which $a_{i_{1}} {{ \bf v}}_{i_{1}}+ \ldots + a_{i_{s}} {{ \bf
v}}_{i_{s}}+ b_{j_{s+1}} {{ \bf v}}_{j_{s+1}}=0$, where
$a_{i_{1}}, a_{i_{2}}, \ldots, a_{i_{s}}, b_{j_{s+1}} \in {\mathbb
F}$, and $a_{i_{l}}\neq 0$ for some $l=1, \ldots, s$ are
analogous. Thus the sets $\{E_{i_{1}}, E_{i_{2}}, \ldots ,
E_{i_{s}}, E_{j_{s+1}} \}$ form a LISF, so $I_1 \cup \{ j_{s+1}\}
\in \mathcal{I} $, where $j_{s+1}\in I_{2}-I_1$. Therefore, the
ordered pair $(S, \mathcal{I})$ is a matroid and the proof is
complete.
\end{proof}

\section{Summary}

We have constructed new families of matroids derived from linearly
independent set families. The presented construction generalizes
in a natural way the class of vectorial matroids over a field.

\section*{Acknowledgment}
This research was partially supported by the Brazilian Agencies
CAPES and CNPq.


\begin{thebibliography}{5}


\bibitem{Brualdi:1969}
R. A. Brualdi, Comments on bases in dependence structures, Bull.
Australian Math. Society, {\bf 1} 161--167, 1969.

\bibitem{Brylawski:1973}
T. H. Brylawski, Some Properties of Basic Families of Subsets,
Discrete Math., {\bf 6} 333--341, 1973.


\bibitem{Eicker:2004}
F. Eicker and G. Ewald, Linearly independent set families, Linear
Algebra and its Applications,  {\bf 388} 173--191, 2004.

\bibitem{Greene:1976}
C. Greene, Weight enumeration and the geometry of linear codes,
Stud. Appl. Math., {\bf 55} 119--128, 1976.

\bibitem{Oxley:1992}
J. G. Oxley, \textit{Matroid Theory}, Oxford University Press, New
York, 1992.

\bibitem{Welsh:1976}
D. J. A. Welsh, \textit{Matroid Theory}, Academic Press Inc.,
London L.T.D., 1976.

\bibitem{Whitney:1935}
H. Whitney, On the abstract properties of linear dependence, Amer.
J. Math., {\bf 57} 509--533, 1935.

\end{thebibliography}
\end{document}